\def\ps@pprintTitle{
 \let\@oddhead\@empty
 \let\@evenhead\@empty
 \def\@oddfoot{\centerline{\thepage}}
 \let\@evenfoot\@oddfoot}
\newtheorem{theorem}{Theorem}[section]
\newtheorem{lemma}{Lemma}[section]
\newtheorem{corollary}{Corollary}[section]
\theoremstyle{remark}
\numberwithin{equation}{section}
\DeclareMathOperator{\td}{d}
\DeclareMathOperator{\te}{e}
\begin{document}

\begin{frontmatter}

\title{Logarithmic complete monotonicity of a matrix-parametrized analogue of the multinomial distribution}

\author[a1,a2]{Fr\'ed\'eric Ouimet\corref{cor1}}
\ead{frederic.ouimet2@mcgill.ca}
\author[a3,a4]{Feng Qi}
\ead{qifeng618@gmail.com}

\address[a1]{Department of Mathematics and Statistics, McGill University, Montreal, QC H3A 0B9, Canada}
\address[a2]{Division of Physics, Mathematics and Astronomy, California Institute of Technology, Pasadena, CA 91125, USA}
\address[a3]{Institute of Mathematics, Henan Polytechnic University, Jiaozuo 454003, Henan, China}
\address[a4]{School of Mathematical Sciences, Tianjin Polytechnic University, Tianjin 300387, China}

\cortext[cor1]{Corresponding author}

\begin{abstract}
In the paper, the authors introduce a matrix-parametrized generalization of the multinomial probability mass function that involves a ratio of several multivariate gamma functions.
They show the logarithmic complete monotonicity of this generalization and derive several inequalities involving ratios of multivariate gamma functions.
\end{abstract}

\begin{keyword}
complete monotonicity \sep logarithmic complete monotonicity \sep ratio \sep multivariate gamma function \sep gamma function \sep polygamma function \sep multinomial distribution \sep matrix-variate Dirichlet distribution \sep positive definite matrix \sep inequality
\MSC[2020]{Primary 26A48; Secondary 05A20 \sep 11B57 \sep 26A51 \sep 33B15 \sep 44A10 \sep 60E05 \sep 60E10 \sep 62E15}
\end{keyword}

\end{frontmatter}

\section{Preliminaries}

Recall from~\cite[Chapter~XIII]{MR1220224}, \cite[Chapter~1]{MR2978140}, and~\cite[Chapter~IV]{MR0005923}, that an infinitely differentiable function $f$ is said to be completely monotonic on an interval $I$ if it has derivatives of all orders on $I$ and satisfies $(-1)^{n}f^{(n)}(x)\ge 0$ for all $x\in I$ and $n\in\mathbb{N}_0=\{0,1,2,\dotsc\}$.
Recall from~\cite[Definition~1]{MR2075188} and~\cite[Definition~5.10]{MR2978140} that an infinitely differentiable and positive function $f$ is said to be logarithmically completely monotonic on an interval $I$ if
\begin{equation*}
(-1)^{n} \frac{\td^{n}}{\td x^{n}} \ln f(x)\ge 0
\end{equation*}
for all $n\in\mathbb{N}=\{1,2,\dotsc\}$ and $x\in I$.
The property of being logarithmically completely monotonic is stronger than being completely monotonic, see~\cite[Theorem~1.1]{MR2112748}, \cite[Theorem~1]{MR2075188}, and~\cite[p.~627, (1.4)]{MR3367457}. When $I=(0,\infty)$, Bernstein's theorem (see, e.g., Theorem~12b in~\citep[p.~161]{MR0005923}) states that a function $f$ is completely monotonic on $(0,\infty)$ if and only if
\begin{equation}\label{Theorem12b-Laplace}
f(x)=\int_0^\infty \te^{-xs}\td\sigma(s)
\end{equation}
and the integral converges for all $x\in(0,\infty)$, where $\sigma(s)$ is nondecreasing on $(0,\infty)$. The integral representation~\eqref{Theorem12b-Laplace} equivalently says that a function $f$ is completely monotonic on $(0,\infty)$ if and only if it is the Laplace transform of $\sigma(s)$ on $(0,\infty)$. This is one of many reasons why researchers have been investigating (logarithmically) completely monotonic functions.
\par
The literature on this topic is far too extensive to cite here, but one specific kind of (logarithmically) completely monotonic functions, functions involving ratios of gamma functions, has attracted a lot of attention lately.
Recent contributions for this type of completely monotonic functions include~\cite{MR3730425, Berg-Cetinkaya-Karp-2021, MR3460546,MR3825458, MR3908972, MR4139119, MR4201158, MR4196670, MR4094493, MR3856139}.
\par
Let $m, r\in \mathbb{N}$ and let $\boldsymbol{M}$ be a matrix of order $m$. We use the notation $\boldsymbol{M}\ge 0$ (or $\boldsymbol{M}>0$, respectively) to indicate that the matrix $\boldsymbol{M}$ is positive semidefinite (or positive definite, respectively).
According to~\cite[Definition~6.2.1]{Gupta-Nagar-1999}, the $r$-tuple of real symmetric matrices, $(\boldsymbol{M}_1,\boldsymbol{M}_2,\dotsc,\boldsymbol{M}_r)$, is said to have the (type I) $(m\times m)$-matrix-variate Dirichlet distribution with parameters $(a_1,a_2,\dotsc,a_{r+1})\in (0,\infty)^{r+1}$ if its density function is given by
\begin{equation*}
\frac{\Gamma_m\bigl(\sum_{i=1}^{r+1} a_i\bigr)}{\prod_{i=1}^{r+1} \Gamma_m(a_i)} \Biggl|\boldsymbol{I}_m-\sum_{i=1}^r \boldsymbol{M}_i\Biggr|^{a_{r+1}-(m+1)/2} \prod_{i=1}^r |\boldsymbol{M}_i|^{a_i-(m+1)/2},
\end{equation*}
when $\boldsymbol{M}_i\ge 0$ for all $1\le i\le r$ and $\boldsymbol{I}_m-\sum_{i=1}^r \boldsymbol{M}_i\ge~0$, and is equal to $0$ otherwise, where
\begin{equation}
\begin{aligned}\label{Gamma(m)-Gamma-Eq}
\Gamma_m(z)
&=\int_{\boldsymbol{S}\in \mathrm{Sym}_m(\mathbb{R}) : \boldsymbol{S} >0} |\boldsymbol{S}|^{z-(m+1)/2} \exp(-\mathrm{tr}(\boldsymbol{S})) \td \boldsymbol{S}\\
&=\pi^{m(m-1)/4} \prod_{j=1}^m \Gamma\biggl(z-\frac{j-1}{2}\biggr), \quad \Re(z)>\frac{m-1}{2}
\end{aligned}
\end{equation}
denotes the multivariate gamma function, see~\cite[Section~35.3]{MR2723248} and~\cite{MR3189307}, and
\begin{equation*}
\Gamma(z)=\lim_{\ell\to\infty}\frac{\ell! \ell^z}{\prod_{k=0}^\ell(z+k)}, \quad z\in\mathbb{C}\setminus\{0,-1,-2,\dotsc\}
\end{equation*}
is the classical gamma function.
\par
Given the ties between the Dirichlet and multinomial distributions, a function that naturally generalizes the multinomial probability mass function is the following matrix-parametrized analogue
\begin{equation*}
P_{n,\boldsymbol{k}_r}(\boldsymbol{M}_1,\dotsc,\boldsymbol{M}_r)
=\frac{\Gamma_m\bigl(n+\frac{m+1}2\bigr)}{\prod_{i=1}^{r+1} \Gamma_m\bigl(k_i+\frac{m+1}2\bigr)} \Biggl|\boldsymbol{I}_m-\sum_{i=1}^r \boldsymbol{M}_i\Biggr|^{n-\|\boldsymbol{k}_r\|_1} \prod_{i=1}^r |\boldsymbol{M}_i|^{k_i},
\end{equation*}
where $n\in \mathbb{N}$, $\boldsymbol{k}_r = (k_1,k_2,\dotsc,k_r)\in\mathbb{N}_0^r \cap (n\mathcal{S}_r)$, $k_{r+1}=n-\|\boldsymbol{k}_r\|_1$, the matrices $\boldsymbol{M}_i$, as well as $\boldsymbol{I}_m-\sum_{i=1}^r \boldsymbol{M}_i$, are assumed to be symmetric and positive definite, and
\begin{equation*}
\mathcal{S}_r=\Biggl\{\boldsymbol{x}_r=(x_1,x_2,\dotsc,x_r)\in [0,1]^r : \|\boldsymbol{x}_r\|_1=\sum_{\ell=1}^{r}x_\ell\le1\Biggr\}
\end{equation*}
denotes the $r$-dimensional unit simplex.
\par
In this paper, we will show that $x\mapsto P_{xn, x\boldsymbol{k}_r}(\boldsymbol{M}_1,\dotsc,\boldsymbol{M}_r)$ is a logarithmically completely monotonic function on $(0,\infty)$. Hereafter, in Section~\ref{sec:combinatorial.inequalities}, we will derive some inequalities involving ratios of multivariate gamma functions.

\section{A lemma}

To reach our goal, we need to prove the following technical lemma.

\begin{lemma}\label{lem:Alzer.2018.lemma.1.generalization}
Let $\mathcal{S}_r^\circ$ denote the interior of $\mathcal{S}_r$, $\boldsymbol{u}_r=(u_1,u_2,\dotsc,u_r)\in\mathcal{S}_r^\circ$, and $u_{r+1}= 1-\|\boldsymbol{u}_r\|_1>0$. Then, for $\beta\ge 0$ and $y>1$,
\begin{equation}\label{eq:lem:Alzer.2018.lemma.1.generalization}
\mathcal{J}_{\beta,\boldsymbol{u}_r}(y)=\frac{1}{y^{\beta}(y-1)}-\sum_{i=1}^{r+1} \frac{1}{y^{\beta/u_i}(y^{1/u_i}-1)}>0.
\end{equation}
\end{lemma}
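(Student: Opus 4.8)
The plan is to pass to the variable $t=\ln y>0$ and to recognize the subtracted sum in \eqref{eq:lem:Alzer.2018.lemma.1.generalization} as a sum over a partition of $1/t$, so that the inequality becomes a star-shapedness (super-additivity) statement. Concretely, writing $G(s)=\frac{1}{\te^{\beta s}(\te^{s}-1)}$ for $s>0$, the quantity $\mathcal{J}_{\beta,\boldsymbol u_r}(y)$ equals $G(t)-\sum_{i=1}^{r+1}G(t/u_i)$, since $y^{\beta/u_i}(y^{1/u_i}-1)=\te^{\beta t/u_i}(\te^{t/u_i}-1)$. Set $a_0=1/t$ and $a_i=u_i/t$ for $1\le i\le r+1$. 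Then $a_0=\sum_{i=1}^{r+1}a_i$ because $\sum_{i=1}^{r+1}u_i=1$, and $0<a_i<a_0$ for every $i$ because $\boldsymbol u_r\in\mathcal S_r^\circ$ forces each $u_i\in(0,1)$. With $\phi(a):=G(1/a)=\frac{1}{\te^{\beta/a}(\te^{1/a}-1)}$ for $a>0$, the claim \eqref{eq:lem:Alzer.2018.lemma.1.generalization} is exactly
\[
\phi(a_0)>\sum_{i=1}^{r+1}\phi(a_i),\qquad a_0=\sum_{i=1}^{r+1}a_i,\quad 0<a_i<a_0 .
\]

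The heart of the matter is to prove that $a\mapsto\phi(a)/a$ is strictly increasing on $(0,\infty)$. Substituting $w=1/a$, a decreasing bijection of $(0,\infty)$ onto itself, this is equivalent to showing that
\[
F(w)=\frac{w}{\te^{\beta w}(\te^{w}-1)}
\]
is strictly decreasing on $(0,\infty)$. As $F>0$, it suffices that $\ln F$ be strictly decreasing, and
\[
\frac{\td}{\td w}\ln F(w)=\frac{1}{w}-\beta-\frac{\te^{w}}{\te^{w}-1}.
\]
Since $\beta\ge 0$, this is negative as soon as $\frac{1}{w}<\frac{\te^{w}}{\te^{w}-1}$, i.e. $\te^{w}-1<w\,\te^{w}$, i.e. $1-\te^{-w}<w$; the last inequality holds for all $w>0$, because $w\mapsto 1-\te^{-w}$ is concave, vanishes at $0$, and has derivative $1$ there, while $w\mapsto w$ is linear. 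Hence $F$ is strictly decreasing, so $a\mapsto\phi(a)/a$ is strictly increasing; the argument covers $\beta=0$ without change.

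With this monotonicity in hand the proof closes by a one-line estimate: since $a_i<a_0$ for each $i$,
\[
\sum_{i=1}^{r+1}\phi(a_i)=\sum_{i=1}^{r+1}a_i\cdot\frac{\phi(a_i)}{a_i}<\Biggl(\sum_{i=1}^{r+1}a_i\Biggr)\frac{\phi(a_0)}{a_0}=a_0\cdot\frac{\phi(a_0)}{a_0}=\phi(a_0),
\]
which is \eqref{eq:lem:Alzer.2018.lemma.1.generalization}. I expect the only genuinely delicate points to be (i) spotting that the constraint $\sum_{i=1}^{r+1}u_i=1$ turns the subtracted sum into a partition of $1/t$, so that the star-shaped estimate applies, and (ii) tracking strictness — which is automatic here because $\boldsymbol u_r$ lies in the \emph{open} simplex, whence every $a_i$ is strictly below $a_0$. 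The differential inequality itself reduces to the elementary bound $1-\te^{-w}<w$, so no hard analysis is needed.
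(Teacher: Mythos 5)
Your argument is correct and is essentially the paper's third proof (attributed to G\'erard Letac): your claim that $\phi(a)/a$ is strictly increasing is, after the substitution $s=1/u$, exactly the statement $g(s)<g(1)$ for $g(s)=s/\bigl(y^{s\beta}(y^{s}-1)\bigr)$, and your differential inequality reduces to the same elementary bound $1-\te^{-w}<w$. The paper also offers two other routes (convexity of $u\mapsto H_{y,\beta}(u)$ with induction, and super-additivity via star-shapedness), but your write-up matches the third and is complete.
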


\begin{proof}[First proof]
For $y>1$, $\beta\ge 0$, and $u\in (0,\infty)$, let
\begin{equation}\label{H(y-beta)(u)}
H_{y,\beta}(u)=\frac{1}{y^{\beta/u}(y^{1/u}-1)}.
\end{equation}
Then straightforward calculations yield
\begin{align*}
\frac{\td^2}{\td u^2} H_{y,\beta}(u)
&=\frac{\ln y}{u^4(y^{1/u}-1)^3y^{\beta/u}}\bigl\{\bigl[\beta^2\bigl(y^{1/u}-1\bigr)^2+2 \beta \bigl(y^{1/u}-1\bigr) y^{1/u}\\
&\quad+\bigl(y^{1/u}+1\bigr) y^{1/u}\bigr]\ln y -2 u \bigl(y^{1/u}-1\bigr) \bigl[\beta \bigl(y^{1/u}-1\bigr)+y^{1/u}\bigr]\bigr\}\\
&=\frac{\ln y}{u^3(y^{1/u}-1)^3y^{\beta/u}} \bigl\{[\beta^2(t-1)^2+2 \beta (t-1) t+(t+1) t]\ln t\\
&\quad-2(t-1)[\beta(t-1)+t]\bigr\}\\
&\triangleq \frac{\ln y}{u^3(y^{1/u}-1)^3y^{\beta/u}} h_{\beta}(t),
\end{align*}
where $t=y^{1/u}>1$. Direct computations show us that
\begin{align*}
h_{\beta}'(t)&=\frac{(t-1)[\beta^2 (t-1)-2 \beta t-3 t]}{t}+[2 \beta^2 (t-1)+\beta (4 t-2)+2 t+1]\ln t,\\
h_{\beta}''(t)&=\frac{(t-1)[\beta^2 (3 t+1)+2 \beta t-t]}{t^2}+2 (\beta+1)^2 \ln t,\\
h_{\beta}'''(t)&=\frac{2 \beta^2(t^2+t+1)+2 \beta t (2 t+1)+t (2 t-1)}{t^3} > 0,
\end{align*}
and
\begin{equation*}
\lim_{t\to1^+}h_{\beta}''(t)=\lim_{t\to1^+}h_{\beta}'(t)=\lim_{t\to1^+}h_{\beta}(t)=0.
\end{equation*}
Accordingly, for any fixed $y>1$ and $\beta\ge 0$, the second derivative $H_{y,\beta}''(u)$ is positive on $(0,\infty)$. Hence, the function $u\mapsto H_{y,\beta}(u)$ is strictly convex on $(0,\infty)$ and the function $u\mapsto H_{y,\beta}(u)+H_{y,\beta}(1-u)$ is strictly convex on $(0,1)$.
From the limits
\begin{equation*}
\lim_{u\to 0^+} H_{y,\beta}(u)=0 \quad\text{and}\quad \lim_{u\to 1^-} H_{y,\beta}(u)=\frac{1}{y^{\beta}(y-1)},
\end{equation*}
we conclude that
\begin{equation*}
H_{y,\beta}(u)+H_{y,\beta}(1-u)<\frac{1}{y^{\beta}(y-1)},
\end{equation*}
which is equivalent to
\begin{equation}\label{eq:case.r.1}
\frac{1}{y^{\beta}(y-1)}-\frac{1}{y^{\beta/\|\boldsymbol{u}_r\|_1}\bigl(y^{1/\|\boldsymbol{u}_r\|_1}-1\bigr)}- \frac{1}{y^{\beta/(1-\|\boldsymbol{u}_r\|_1)}\bigl[y^{1/(1-\|\boldsymbol{u}_r\|_1)}-1\bigr]}>0.
\end{equation}
\par
The general case will follow by induction.
Indeed, assume that, for some integer $r\ge  2$, the inequality
\begin{equation}\label{eq:induction}
\frac{1}{z^{\beta}(z-1)}-\sum_{i=1}^r \frac{1}{z^{\beta/v_i}(z^{1/v_i}-1)}>0
\end{equation}
is valid, where $\beta\ge 0$, $z>1$, $(v_1,v_2,\dotsc,v_{r-1})\in \mathcal{S}_{r-1}^\circ$, and $v_r=1-\sum_{i=1}^{r-1} v_i>0$.
On the other hand, we can rewrite the function $\mathcal{J}_{\beta,\boldsymbol{u}_r}(y)$ defined in~\eqref{eq:lem:Alzer.2018.lemma.1.generalization} as
\begin{multline*}
\mathcal{J}_{\beta,\boldsymbol{u}_r}(y)
=\biggl[\frac{1}{y^{\beta/\|\boldsymbol{u}_r\|_1}(y^{1/\|\boldsymbol{u}_r\|_1}-1)}
-\sum_{i=1}^r \frac{1}{y^{\beta/u_i}(y^{1/u_i}-1)}\biggr]\\
+\biggl[\frac{1}{y^{\beta}(y-1)}-\frac{1}{y^{\beta/\|\boldsymbol{u}_r\|_1}(y^{1/\|\boldsymbol{u}_r\|_1}-1)}
-\frac{1}{y^{\beta/(1-\|\boldsymbol{u}_r\|_1)}[y^{1/(1-\|\boldsymbol{u}_r\|_1)}-1]}\biggr].
\end{multline*}
By the inequality~\eqref{eq:case.r.1}, the quantity in the second bracket is positive. By the induction hypothesis~\eqref{eq:induction} with $z=y^{1/\|\boldsymbol{u}_r\|_1}$ and $v_i=\frac{u_i}{\|\boldsymbol{u}_r\|_1}$, the quantity in the first bracket is positive.
This ends the first proof of Lemma~\ref{lem:Alzer.2018.lemma.1.generalization}.
\end{proof}

\begin{proof}[Second proof]
A function $\varphi(x)$ is said to be super-additive on an interval $I$ if the inequality $\varphi(x+y)\ge \varphi(x)+\varphi(y)$ holds for all $x,y\in I$ with $x+y\in I$.
A function $\varphi:[0,\infty)\to\mathbb{R}$ is said to be star-shaped if $\varphi(\nu t)\le\nu\varphi(t)$ for $\nu\in[0,1]$ and $t\ge 0$. See~\cite[Chapter~16]{Marshall-Olkin-Arnold} and~\cite[Section~3.4]{Niculescu-Persson-Monograph-2018}.
Between convex functions, star-shaped functions, and super-additive functions, there are the following relations:
\begin{enumerate}
\item
if $\varphi$ is convex on $[0,\infty)$ with $\varphi(0)\le0$, then $\varphi$ is star-shaped;
\item
if $\varphi:[0,\infty)\to\mathbb{R}$ is star-shaped, then $\varphi$ is super-additive.
\end{enumerate}
See~\cite[pp.~650--651, Section~B.9]{Marshall-Olkin-Arnold}. Shortly speaking, if $\varphi$ is convex on $[0,\infty)$ with $\varphi(0)\le0$, then $\varphi$ is super-additive.
\par
For $y>1$, $\beta\ge 0$, and $u\in (-\infty,\infty)$, define
\begin{equation*}
\varphi_{y,\beta}(u)=\begin{dcases}
0, & u=0;\\
\frac{1}{y^{\beta/u}(y^{1/u}-1)}, & u\ne 0.
\end{dcases}
\end{equation*}
From the facts that the function $H_{y,\beta}(u)$ defined in~\eqref{H(y-beta)(u)} is strictly convex on $(0,\infty)$, that the limit
\begin{equation*}
\lim_{u\to0^+}H_{y,\beta}(u)=0
\end{equation*}
is valid, and that $\varphi_{y,\beta}(u)=H_{y,\beta}(u)$ for $u>0$, we conclude that the function $u\mapsto\varphi_{y,\beta}(u)$ is convex on $[0,\infty)$ with $\varphi_{y,\beta}(0)=0$. This means that the function $u\in[0,\infty)\mapsto\varphi_{y,\beta}(u)$ is star-shaped and super-additive. Accordingly, it follows that
\begin{equation*}
\sum_{i=1}^{r+1}\varphi_{y,\beta}(u_i)\le \varphi_{y,\beta}\Biggl(\sum_{i=1}^{r+1}u_i\Biggr)=\varphi_{y,\beta}(1),
\end{equation*}
that is,
\begin{equation*}
\sum_{i=1}^{r+1} \frac{1}{y^{\beta/u_i}(y^{1/u_i}-1)}
\le \frac{1}{y^{\beta}(y-1)}.
\end{equation*}
The inequality~\eqref{eq:lem:Alzer.2018.lemma.1.generalization} is thus proved. The second proof of Lemma~\ref{lem:Alzer.2018.lemma.1.generalization} is complete.
\end{proof}

\begin{proof}[Third proof (due to G\'erard Letac)]
Since $u_i>0$ for $1\le i\le r+1$ and $\sum_{i=1}^{r+1} u_i=1$, it is sufficient to show that
\begin{equation*}
\frac{1}{y^{\beta/u_i} (y^{1/u_i}-1)} < \frac{u_i}{y^{\beta} (y-1)}, \quad 1 \le i \le r+1.
\end{equation*}
By writing
\begin{equation*}
s_i=\frac{1}{u_i} \quad \text{and} \quad g(s_i)=\frac{s_i}{y^{s_i \beta} (y^{s_i}-1)},
\end{equation*}
it is sufficient to show that
\begin{equation*}
g(s) < g(1), \quad s>1.
\end{equation*}
But this readily follows from
\begin{equation*}
\frac{\td}{\td s} \ln g(s)=\biggl[\frac{1}{\ln(y^s)}-\frac{y^s}{y^s-1}-\beta\biggr]\ln y
=\biggl(\frac{1}{x}-\frac{1}{1-\te^{-x}}-\beta\biggr)\ln y
<0,
\end{equation*}
where we made the substitution $x=\ln(y^s)>0$ for $s,y>1$ and we used the inequality $\te^{-x}>1-x$ for $x\in(0,\infty)$.
The third proof of Lemma~\ref{lem:Alzer.2018.lemma.1.generalization} is complete.
\end{proof}

\section{Logarithmic complete monotonicity}

Now we are in a position to state and prove our main result.

\begin{theorem}\label{thm:completely.monotonic}
Let $m,n,r\in \mathbb{N}$, $\boldsymbol{\alpha}_r=(\alpha_1, \alpha_2, \dotsc, \alpha_r)\in n\mathcal{S}_r^\circ$, $\alpha_{r+1}=n- \|\boldsymbol{\alpha}_r\|_1>0$, and let $\boldsymbol{M}_i$ for $0\le i\le r$ be symmetric matrices of order $m$, satisfying $\boldsymbol{M}_i\ge 0$ and $\boldsymbol{M}_{r+1}=\boldsymbol{I}_m-\sum_{i=1}^r \boldsymbol{M}_i\ge 0$. Then the function
\begin{equation}\label{eq:thm:completely.monotonic}
\mathcal{Q}(x)=\frac{\Gamma_m\bigl(xn+\frac{m+1}2\bigr)}{\prod_{i=1}^{r+1} \Gamma_m\bigl(x\alpha_i+\frac{m+1}2\bigr)} \prod_{i=1}^{r+1} |\boldsymbol{M}_i|^{x\alpha_i}
\end{equation}
is logarithmically completely monotonic on $(0,\infty)$.
\end{theorem}

\begin{proof}
Without loss of generality, we can assume that $\boldsymbol{M}_i>0$ for all $0\le i\le r+1$.
By taking the logarithm on both sides of the equation~\eqref{eq:thm:completely.monotonic}, we have
\begin{equation*}
\ln \mathcal{Q}(x)
=\ln \Gamma_m\biggl(xn+\frac{m+1}{2}\biggr)-\sum_{i=1}^{r+1} \ln \Gamma_m\biggl(x\alpha_i+\frac{m+1}{2}\biggr) +x \sum_{i=1}^{r+1}\alpha_i\ln |\boldsymbol{M}_i|.
\end{equation*}
Denote
\begin{equation*}
\psi_m(z)=\frac{\td}{\td z} \ln \Gamma_m(z)=\sum_{j=1}^m \psi\biggl(z-\frac{j-1}{2}\biggr).
\end{equation*}
Then a direct differentiation gives
\begin{equation}\label{eq:prop:completely.monotonic.derivative.h}
\begin{aligned}
[\ln \mathcal{Q}(x)]'
&=n\psi_m\biggl(xn+\frac{m+1}{2}\biggr)-\sum_{i=1}^{r+1}\alpha_i\psi_m\biggl(x\alpha_i+\frac{m+1}{2}\biggr) +\sum_{i=1}^{r+1}\alpha_i\ln|\boldsymbol{M}_i|\\
&=\sum_{j=1}^m \Biggl\{n\psi\biggl(xn+\frac{m-j}{2}+1\biggr)- \sum_{i=1}^{r+1}\alpha_i\psi\biggl(x\alpha_i+\frac{m-j}{2}+1\biggr)\Biggr\} \\
&\quad+\sum_{i=1}^{r+1}\alpha_i\ln|\boldsymbol{M}_i|.
\end{aligned}
\end{equation}
Using the special case $\ell=1$ of the integral representation
\begin{equation*}
\psi^{(\ell)}(z)=(-1)^{\ell+1}\int_0^{\infty} \frac{t^\ell \te^{-zt}}{1-\te^{-t}} \td t, \quad \Re(z)>0, \quad\ell\in\mathbb{N},
\end{equation*}
listed in~\cite[p.~260, 6.4.1]{MR0167642}, we obtain
\begin{align*}
[\ln \mathcal{Q}(x)]''
&=\sum_{j=1}^m \Biggl\{n^2 \psi'\biggl(xn+\frac{m-j}{2}+1\biggr)-\sum_{i=1}^{r+1} \alpha_i^2 \psi'\biggl(x\alpha_i+\frac{m-j}{2}+1\biggr)\Biggr\}\\
&=\sum_{j=1}^m \Biggl\{n^2 \int_0^{\infty} \frac{t}{\te^t-1}\exp\biggl[-\biggl(xn+\frac{m-j}{2}\biggr) t\biggr]\td t\\
&\quad-\sum_{i=1}^{r+1} \alpha_i^2 \int_0^{\infty} \frac{t}{\te^t-1}\exp\biggl[-\biggl(x\alpha_i+\frac{m-j}{2}\biggr)t\biggr] \td t\Biggr\} \\
&=\sum_{j=1}^m \Biggl\{\int_0^{\infty} \frac{se^{-xs}}{\te^{s/n}-1}\exp\biggl(-\frac{m-j}{2n}s\biggr)\td s\\
&\quad-\sum_{i=1}^{r+1} \int_0^{\infty}\frac{se^{-xs}}{\te^{s/\alpha_i}-1} \exp\biggl(-\frac{m-j}{2\alpha_i}s\biggr)\td s\Biggr\} \\
&=\sum_{j=1}^m \int_0^{\infty} \mathcal{J}_{(m-j)/2,\boldsymbol{\alpha}_r/n}\bigl(\te^{s/n}\bigr)se^{-xs} \td s,
\end{align*}
where $\mathcal{J}_{\beta,\boldsymbol{u}_r}(y)$ is defined in~\eqref{eq:lem:Alzer.2018.lemma.1.generalization}.
Applying Lemma~\ref{lem:Alzer.2018.lemma.1.generalization} yields
\begin{equation}\label{eq:prop:completely.monotonic.almost.equation}
(-1)^{n+1} \frac{\td^{n+1}}{\td x^{n+1}} \ln \mathcal{Q}(x)=\sum_{j=1}^m \int_0^{\infty} \mathcal{J}_{(m-j)/2,\boldsymbol{\alpha}_r/n}\bigl(\te^{s/n}\bigr) s^n \te^{-xs} \td s>0
\end{equation}
for $n\in\mathbb{N}$ and $x>0$.
\par
To prove the logarithmic complete monotonicity of $\mathcal{Q}(x)$ on $(0,\infty)$, it remains to verify that the positivity in~\eqref{eq:prop:completely.monotonic.almost.equation} is also valid for $n=0$. From the positivity of~\eqref{eq:prop:completely.monotonic.almost.equation} for $n=1$, it follows that the function $x\mapsto [-\ln \mathcal{Q}(x)]'$ is decreasing on $(0,\infty)$. Therefore, it is sufficient to show that $\lim_{x\to\infty} [-\ln \mathcal{Q}(x)]'\ge 0$.
\par
For $k\in\mathbb{N}_0$ and $a\ge 0$, the first result in~\cite[Lemma~2.8]{Alice-y-x-curvature.tex} reads as
\begin{equation}\label{polyg-diff-lim-ID1}
\lim_{x\to\infty}\bigl(x^{k+1}\bigl[\psi^{(k)}(x+a)-\psi^{(k)}(x)\bigr]\bigr)
=(-1)^kk! a.
\end{equation}
Hence, the case $k=0$ and $a=\frac{1}{2}$ in~\eqref{polyg-diff-lim-ID1} implies the asymptotic formula
\begin{equation*}
\psi\biggl(z+\frac{1}{2}\biggr)\sim\psi(z)+\frac{1}{2z}, \quad z\to\infty.
\end{equation*}
Therefore, we have
\begin{align*}
\psi\biggl(xn+\frac{m-j}{2}+1\biggr)
&\sim\psi\biggl(xn+\frac{m-j+1}{2}\biggr)+\frac{1}{2n\bigl(x+\frac{m-j+1}{2n}\bigr)}\\
&\sim\psi\biggl(xn+\frac{m-j}{2}\biggr)+\frac{1}{2n\bigl(x+\frac{m-j}{2n}\bigr)}+\frac{1}{2n\bigl(x+\frac{m-j+1}{2n}\bigr)}
\end{align*}
as $x\to\infty$. By induction, we obtain
\begin{equation}\label{psi-asym-eq1}
\psi\biggl(xn+\frac{m-j}{2}+1\biggr)\sim\psi(xn)+\frac{1}{2n}\sum_{\ell=0}^{m-j+1}\frac{1}{x+\frac{\ell}{2n}}, \quad x\to\infty.
\end{equation}
Similarly, we derive
\begin{equation}\label{psi-asym-eq2}
\psi\biggl(x\alpha_i+\frac{m-j}{2}+1\biggr)
\sim\psi(x\alpha_i)+\frac{1}{2\alpha_i} \sum_{\ell=0}^{m-j+1}\frac{1}{x+\frac{\ell}{2\alpha_i}}, \quad x\to\infty.
\end{equation}
Substituting~\eqref{psi-asym-eq1} and~\eqref{psi-asym-eq2} into \eqref{eq:prop:completely.monotonic.derivative.h} and simplifying lead to
\begin{align*}
[-\ln \mathcal{Q}(x)]'
&\sim m\Biggl[\sum_{i=1}^{r+1}\alpha_i\psi(x\alpha_i) -n\psi(xn)\Biggr]-\sum_{i=1}^{r+1}\alpha_i\ln|\boldsymbol{M}_i|\\
&\quad+\frac{1}{2}\sum_{j=1}^m\sum_{\ell=0}^{m-j+1}\Biggl(\sum_{i=1}^{r+1}\frac{1}{x+\frac{\ell}{2\alpha_i}} -\frac{1}{x+\frac{\ell}{2n}}\Biggr), \quad x\to \infty.
\end{align*}
Using the asymptotic formula
\begin{equation*}
\psi(z) \sim\ln z-\frac{1}{2z}, \quad z\to \infty,
\end{equation*}
derived from~\cite[p.~259, 6.3.18]{MR0167642}, we conclude that
\begin{align*}
[-\ln \mathcal{Q}(x)]'
&\sim m\Biggl[\sum_{i=1}^{r+1}\alpha_i\ln(x\alpha_i) -n\ln(xn)\Biggr]-\sum_{i=1}^{r+1}\alpha_i\ln|\boldsymbol{M}_i|\\
&\quad-\frac{mr}{2x}+\frac{1}{2}\sum_{j=1}^m\sum_{\ell=0}^{m-j+1}\Biggl(\sum_{i=1}^{r+1}\frac{1}{x+\frac{\ell}{2\alpha_i}} -\frac{1}{x+\frac{\ell}{2n}}\Biggr), \quad x\to \infty.
\end{align*}
Consequently, we have
\begin{equation}\label{eq:near.proof}
\begin{aligned}
\lim_{x\to\infty} [-\ln \mathcal{Q}(x)]'
&=mn\Biggl[\sum_{i=1}^{r+1}\frac{\alpha_i}{n}\ln\frac{\alpha_i}{n} -\sum_{i=1}^{r+1}\frac{\alpha_i}{n}\ln\Bigl(|\boldsymbol{M}_i|^{1/m}\Bigr)\Biggr]\\ &=-mn\sum_{i=1}^{r+1}\frac{\alpha_i}{n}\ln\biggl(\frac{|\boldsymbol{M}_i|^{1/m}}{\alpha_i/n}\biggr)\\
&\ge -mn\ln\Biggl(\sum_{i=1}^{r+1}|\boldsymbol{M}_i|^{1/m}\Biggr)\\
&\ge -mn\ln\Biggl(\biggl|\sum_{i=1}^{r+1}\boldsymbol{M}_i\biggr|^{1/m}\Biggr)\\
&=-mn\ln\bigl(|\boldsymbol{I}_m|^{1/m}\bigr)\\
&=0,
\end{aligned}
\end{equation}
where we used Jensen's inequality for the concave function $\ln x$ on $(0,\infty)$, see~\cite[Chapter~I, p.~6]{MR1220224}, and Minkowski's determinant inequality for the symmetric positive semidefinite matrices $\boldsymbol{M}_i$ of order $m$, see~\cite{MR223383} or~\cite[Chapter~VIII, p.~214, Theorem~2]{MR1220224}.
This ends the proof of Theorem~\ref{thm:completely.monotonic}.
\end{proof}

A trivial modification at the end of the last proof yields the following corollary.

\begin{corollary}\label{cor:completely.monotonic}
Let $m,n,r\in \mathbb{N}$, $\boldsymbol{\alpha}_r=(\alpha_1, \alpha_2, \dotsc, \alpha_r)\in n\mathcal{S}_r^\circ$, $\alpha_{r+1}=n- \|\boldsymbol{\alpha}_r\|_1>0$, and let $(p_1,p_2,\dotsc,p_r)\in \mathcal{S}_r$ with $p_{r+1} = 1 - \sum_{i=1}^r p_i$.
Then the function
\begin{equation}\label{eq:cor:completely.monotonic}
\widetilde{\mathcal{Q}}(x)=\frac{\Gamma_m\bigl(xn+\frac{m+1}2\bigr)}{\prod_{i=1}^{r+1} \Gamma_m\bigl(x\alpha_i+\frac{m+1}2\bigr)} \prod_{i=1}^{r+1} p_i^{x\alpha_i}
\end{equation}
is logarithmically completely monotonic on $(0,\infty)$.
\end{corollary}

\begin{proof}
In the proof of Theorem~\ref{thm:completely.monotonic}, replace $|\boldsymbol{M}_i|$ everywhere by $p_i$. On the third line of \eqref{eq:near.proof}, simply use Jensen's inequality,
\begin{equation}
\sum_{i=1}^{r+1} p_i^{1/m} \leq \left(\sum_{i=1}^{r+1} p_i\right)^{1/m} = 1^{1/m} = 1,
\end{equation}
to obtain the desired conclusion.
\end{proof}

\section{Inequalities involving ratios of multivariate gamma functions}\label{sec:combinatorial.inequalities}

Considering the factor involving the gamma functions in~\eqref{eq:thm:completely.monotonic} and making use of the formula~\eqref{Gamma(m)-Gamma-Eq}, we denote and write
\begin{equation*}
R(x)
=\frac{\Gamma_m\bigl(xn+\frac{m+1}2\bigr)}{\prod_{i=1}^{r+1} \Gamma_m\bigl(x\alpha_i+\frac{m+1}2\bigr)}
=\frac{\prod_{j=1}^m \Gamma\bigl(xn+\frac{m-j}{2}+1\bigr)}{\pi^{r m(m-1)/4} \prod_{i=1}^{r+1} \prod_{j=1}^m \Gamma\bigl(x\alpha_i+ \frac{m-j}{2}+1\bigr)}.
\end{equation*}
We are now ready to derive several inequalities for $R(x)$ involving ratios of multivariate gamma functions $\Gamma_m$.

\begin{theorem}\label{multivariate-gamma-ineq-thm}
Let $\ell\in \mathbb{N}$ and let $(\lambda_1,\dotsc,\lambda_{\ell})\in (0,\infty)^{\ell}$ such that $\sum_{k=1}^\ell \lambda_k=1$.
\begin{enumerate}
\item
For $(x_1,\dotsc,x_{\ell})\in (0,\infty)^{\ell}$, we have
\begin{equation}\label{R(x)-Jensen-ineq}
R\Biggl(\sum_{k=1}^\ell \lambda_k x_k\Biggr) \le \prod_{k=1}^\ell[R(x_k)]^{\lambda_k},
\end{equation}
where the equality holds if and only if $x_1=x_2=\dotsm=x_k$.
\item
For $(x_1,\dotsc,x_{\ell})\in (0,\infty)^{\ell}$, we have
\begin{equation}\label{R(x)-Supper-Additive-ineq}
\prod_{k=1}^\ell R(x_k) < R\Biggl(\sum_{k=1}^\ell x_k\Biggr).
\end{equation}
\item
For $\ell=3$ and $(x_1,x_2,x_3)\in(0,\infty)^3$, if $x_1 \le x_3$, then
\begin{equation}\label{R(x)-3-factor-ineq}
R(x_1+x_2) R(x_3) \le R(x_1) R(x_2+x_3),
\end{equation}
where the equality holds if and only if $x_1=x_3$.
\end{enumerate}
\end{theorem}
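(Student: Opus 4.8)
The plan is to reduce all three inequalities to convexity properties of the single function $\phi(x)=\ln R(x)$, which is positive and smooth on $(-\varepsilon,\infty)$ for some $\varepsilon>0$ since every gamma argument appearing in $R$ is at least $\frac{m+1}{2}>\frac{m-1}{2}$ near $x\ge 0$. The crucial ingredient, which I would harvest directly from the proof of Theorem~\ref{thm:completely.monotonic}, is that $\phi$ is \emph{strictly convex} on $(0,\infty)$: writing $\ln\mathcal{Q}(x)=\phi(x)+x\sum_{i=1}^{r+1}\alpha_i\ln|\boldsymbol{M}_i|$, the linear term is annihilated by two differentiations, so the computation there yields
\[
\phi''(x)=[\ln\mathcal{Q}(x)]''=\sum_{j=1}^m\int_0^\infty\mathcal{J}_{(m-j)/2,\boldsymbol{\alpha}_r/n}\bigl(\te^{s/n}\bigr)\,s\,\te^{-xs}\,\td s>0,\qquad x>0,
\]
the positivity being Lemma~\ref{lem:Alzer.2018.lemma.1.generalization}; hence $\phi$ is convex on $[0,\infty)$ and $\phi'$ is strictly increasing on $(0,\infty)$.

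Granting this, item~(1) is just Jensen's inequality $\phi\bigl(\sum_{k}\lambda_k x_k\bigr)\le\sum_{k}\lambda_k\phi(x_k)$ for the convex $\phi$ and weights $\lambda_k>0$, followed by exponentiation, with the equality case $x_1=\dots=x_\ell$ coming from strict convexity. For item~(3) I would fix $x_2>0$ and study $\Delta(t)=\phi(t+x_2)-\phi(t)$ on $(0,\infty)$: since $\Delta'(t)=\phi'(t+x_2)-\phi'(t)>0$, the function $\Delta$ is strictly increasing, so $x_1\le x_3$ forces $\Delta(x_1)\le\Delta(x_3)$, i.e.\ $\phi(x_1+x_2)-\phi(x_1)\le\phi(x_2+x_3)-\phi(x_3)$, which rearranges to~\eqref{R(x)-3-factor-ineq}, with equality precisely when $x_1=x_3$.

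Item~(2) requires one extra fact, the sign of $\phi$ at the origin. I compute $R(0)=\Gamma_m\bigl(\frac{m+1}{2}\bigr)^{-r}$ and, from~\eqref{Gamma(m)-Gamma-Eq}, $\Gamma_m\bigl(\frac{m+1}{2}\bigr)=\pi^{m(m-1)/4}\prod_{k=2}^{m+1}\Gamma(k/2)$; since $\Gamma(k/2)\ge1$ for all $k\ne3$, while already $\pi^{m(m-1)/4}\,\Gamma(3/2)\ge\pi/2>1$ when $m\ge2$ (and $\Gamma_1(1)=1$ when $m=1$), one gets $\Gamma_m\bigl(\frac{m+1}{2}\bigr)\ge1$, hence $\phi(0)=\ln R(0)\le0$. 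Then $\phi$ is convex on $[0,\infty)$ with $\phi(0)\le0$, so it is star-shaped and therefore super-additive by the implications recalled in the second proof of Lemma~\ref{lem:Alzer.2018.lemma.1.generalization}; iterating super-additivity over the $\ell$ summands gives $\phi\bigl(\sum_{k}x_k\bigr)\ge\sum_{k}\phi(x_k)$. To turn this into the strict inequality~\eqref{R(x)-Supper-Additive-ineq}, I would sharpen one step: for $a,b>0$, strict monotonicity of $\phi'$ gives $\phi(a+b)-\phi(b)=\int_b^{a+b}\phi'(t)\,\td t>\int_0^a\phi'(t)\,\td t=\phi(a)-\phi(0)\ge\phi(a)$.

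I do not anticipate a genuine obstacle: once strict convexity of $\phi=\ln R$ is extracted from the proof of Theorem~\ref{thm:completely.monotonic}, items~(1)--(3) are routine. The only points needing care are the elementary estimate $\Gamma_m\bigl(\frac{m+1}{2}\bigr)\ge1$ underlying item~(2) and the bookkeeping of the equality cases, both of which are handled above.
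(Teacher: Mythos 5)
Your proposal is correct, and for item (1) it is essentially the paper's argument: logarithmic convexity extracted from the second-derivative computation in the proof of Theorem~\ref{thm:completely.monotonic}, followed by Jensen's inequality; you simply work with $\phi=\ln R$ directly rather than with $\ln\mathcal{Q}$ and then cancelling the determinant factors, which is a harmless (and slightly cleaner) reformulation since the term $x\sum_i\alpha_i\ln|\boldsymbol{M}_i|$ is linear. For items (2) and (3) your route is genuinely more self-contained. The paper proves (2) by citing Lemma~3 of \cite{MR3730425} applied to $\mathcal{Q}$ (equivalently, the convexity $+$ $\varphi(0)\le0$ $\Rightarrow$ star-shaped $\Rightarrow$ super-additive chain from the second proof of Lemma~\ref{lem:Alzer.2018.lemma.1.generalization}), and proves (3) only by pointing to the proofs of Theorem~3.5 in \cite{MR4201158} and Corollary~3 in \cite{MR3730425}; you instead supply the strictness in (2) explicitly via the integral comparison $\phi(a+b)-\phi(b)=\int_0^a\phi'(t+b)\,\td t>\int_0^a\phi'(t)\,\td t\ge\phi(a)$, and give (3) a complete two-line proof via the strictly increasing difference $\Delta(t)=\phi(t+x_2)-\phi(t)$, which is exactly the standard argument the cited references contain. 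One further point in your favour: your verification of $\Gamma_m\bigl(\frac{m+1}{2}\bigr)\ge1$ is more careful than the paper's, which asserts $\prod_{k=0}^{m-1}\Gamma\bigl(1+\frac{k}{2}\bigr)\ge1$ as an intermediate step --- this fails for $m=2,3$ because $\Gamma(3/2)=\sqrt{\pi}/2<1$ --- whereas your grouping $\pi^{m(m-1)/4}\,\Gamma(3/2)\ge\pi/2>1$ for $m\ge2$ (and $\Gamma_1(1)=1$) repairs that slip and reaches the same (correct) conclusion. The only cosmetic blemish is the phrase ``$\phi$ is positive'' at the outset, where you clearly mean that $R$ is positive so that $\ln R$ is defined; this does not affect the argument.
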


\begin{proof}
The logarithmic complete monotonicity in Theorem~\ref{thm:completely.monotonic} implies that the function $\mathcal{Q}$ is logarithmically convex on $(0,\infty)$. Consequently, we obtain
\begin{multline*}
\mathcal{Q}\Biggl(\sum_{k=1}^\ell \lambda_k x_k\Biggr)=R\Biggl(\sum_{k=1}^\ell \lambda_k x_k\Biggr) \prod_{i=1}^{r+1} |\boldsymbol{M}_i|^{\alpha_i\sum_{k=1}^\ell \lambda_k x_k}\\
\le \prod_{k=1}^\ell[\mathcal{Q}(x_k)]^{\lambda_k}
=\prod_{k=1}^\ell[R(x_k)]^{\lambda_k} \prod_{k=1}^\ell\Biggl[\prod_{i=1}^{r+1} |\boldsymbol{M}_i|^{x_k\alpha_i}\Biggr]^{\lambda_k}
\end{multline*}
which can be simplified as~\eqref{R(x)-Jensen-ineq}. Due to the logarithmic convexity, it is trivial to see that the equality in~\eqref{R(x)-Jensen-ineq} holds if and only if $x_1=x_2=\dotsm=x_k$.
\par
Lemma~3 in~\cite{MR3730425} states that, if $h:[0,\infty)\to(0,1]$ is differentiable and the logarithmic derivative $\frac{h'(x)}{h(x)}=[\ln h(x)]'$ is strictly increasing on $(0,\infty)$, then the strict inequality $h(x)h(y)<h(x+y)$ is valid for $x,y\in(0,\infty)$.
By the way, we notice that this lemma is a special case $\varphi(x)=\ln h(x)$ of the result concluded in the first paragraph in the second proof of Lemma~\ref{lem:Alzer.2018.lemma.1.generalization} in this paper. From this, we can inductively derive
\begin{equation}\label{h(x)-supper=add-ineq}
\prod_{j=1}^{\ell}h(x_j)<h\Biggl(\sum_{j=1}^{\ell}x_j\Biggr).
\end{equation}
The logarithmic complete monotonicity in Theorem~\ref{thm:completely.monotonic} implies that $\mathcal{Q}(x)$ is decreasing on $(0,\infty)$ and that the logarithmic derivative $\frac{\mathcal{Q}'(x)}{\mathcal{Q}(x)}=[\ln\mathcal{Q}(x)]'$ is strictly increasing on $(0,\infty)$. Since
\begin{equation*}
\lim_{x\to0^+}\mathcal{Q}(x)=\frac{\Gamma_m\bigl(\frac{m+1}2\bigr)}{\prod_{i=1}^{r+1} \Gamma_m\bigl(\frac{m+1}2\bigr)}
=\frac{1}{\bigl[\Gamma_m\bigl(\frac{m+1}2\bigr)\bigr]^r}
\end{equation*}
and
\begin{gather*}
\Gamma_m\biggl(\frac{m+1}2\biggr)=\pi^{(m-1)m/4} \prod _{j=1}^m \Gamma\biggl(1+\frac{m-j}{2}\biggr)\\
=\pi^{(m-1)m/4} \prod _{k=0}^{m-1} \Gamma\biggl(1+\frac{k}{2}\biggr)
\ge\pi^{(m-1)m/4}
\ge1
\end{gather*}
for $m\in\mathbb{N}$, from the decreasing property of $\mathcal{Q}(x)$ on $(0,\infty)$, we deduce that
\begin{equation*}
0<\mathcal{Q}(0)\triangleq\lim_{x\to0^+}\mathcal{Q}(x)\le1.
\end{equation*}
Consequently, by applying the inequality~\eqref{h(x)-supper=add-ineq} to $\mathcal{Q}(x)$, we have
\begin{multline*}
\prod_{j=1}^{\ell}\mathcal{Q}(x_j)
=\prod_{j=1}^{\ell}\Biggl[R(x_j) \prod_{i=1}^{r+1} |\boldsymbol{M}_i|^{x_j\alpha_i}\Biggr]
=\prod_{j=1}^{\ell}R(x_j) \prod_{i=1}^{r+1} |\boldsymbol{M}_i|^{\alpha_i\sum_{j=1}^{\ell}x_j}\\
<\mathcal{Q}\Biggl(\sum_{j=1}^{\ell}x_j\Biggr)
=R\Biggl(\sum_{j=1}^{\ell}x_j\Biggr)\prod_{i=1}^{r+1} |\boldsymbol{M}_i|^{\alpha_i\sum_{j=1}^{\ell}x_j},
\end{multline*}
which can be reformulated as the inequality~\eqref{R(x)-Supper-Additive-ineq}.
\par
As done in the proof of Theorem~3.5 in~\cite{MR4201158}, the inequality~\eqref{R(x)-3-factor-ineq} and the equality case follow by using the logarithmic complete monotonicity of the function $\mathcal{Q}(x)$ in Theorem~\ref{thm:completely.monotonic} and by adapting the proof of Corollary 3 in~\cite{MR3730425}. The proof of Theorem~\ref{multivariate-gamma-ineq-thm} is complete.
\end{proof}

\section{Acknowledgements and declarations}

\subsection{Acknowledgements}
The authors thank G\'erard Letac (Institut de Math\'ematiques de Toulouse, Universit\'e Paul Sabatier, France; gerard.letac@math.univ-toulouse.fr) for providing the third proof of Lemma~\ref{lem:Alzer.2018.lemma.1.generalization}.

\subsection{Funding}
F. Ouimet was supported by postdoctoral fellowships from the NSERC (PDF) and the FRQNT (B3X supplement and B3XR).

\subsection{Data availability statement}
No data were used to support this study.

\subsection{Conflicts of interest}
The authors declare no conflict of interest.

\subsection{Author contributions}
Writing – original draft, Fr\'ed\'eric Ouimet and Feng Qi; Writing – review \& editing, Fr\'ed\'eric Ouimet and Feng Qi.

\bibliographystyle{authordate1}
\bibliography{Ouimet-Qi-2022-bib}

\end{document}